\documentclass[12pt]{article}
\usepackage{amsmath,amsthm, mathtools}
\everymath{\scriptsize}
\everydisplay{\scriptsize}
\makeatother
\usepackage{graphicx}
\usepackage{float}
\usepackage{color}
\usepackage{fancyhdr}
\usepackage{verbatim}
\usepackage{amsfonts,longtable}
\usepackage[margin=1in]{geometry}
\usepackage[numbers,sort&compress]{natbib}

\def\bc{\begin{center}}
\def\ec{\end{center}}
\newtheorem{definition}{Definition}[section]
\newtheorem{theorem}{Theorem}[section]

\newcommand{\pd}[2]{\frac{\partial #1}{\partial #2}}

\begin{document}
\bc {\bf Analytical Methods for Solving Non-Autonomous Higher-Order Nonlinear Difference Equations
  }\ec
\medskip
\bc
Tshifhiwa Murovhi$^{1}$, Mensah Folly-Gbetoula $^{1}$\footnote{Corresponding author:\\ Mensah.Folly-Gbetoula@wits.ac.za(M. Folly-Gbetoula)\\
	ORCID: 0000-0002-3046-0679}, Kwassi Anani $^{2}$
\vspace{1cm}
\\$^{1}$School of Mathematics, University of the Witwatersrand, Wits 2050, Johannesburg,\\ South Africa.\\
$^{2}$ Department of Mathematics, University of Lomé,
{Lomé},
	{01 BP 1515}, {Togo}

\ec
\begin{abstract}
\noindent
A system of four-dimensional fifth-order rational difference equations with variable coefficients is considered. The method employed is the well-known Lie symmetry analysis initially developed for differential equations. Linearised symmetry conditions are applied, reduction operators are
introduced, functional equations are solved, and symmetry generators are derived. The admitted point symmetries are then used to construct closed-form solutions. This work is a non-autonomous generalisation of autonomous fifth-order rational systems in the literature.
\end{abstract}
\textbf{Keywords} Difference equation; Lie symmetry analysis;
canonical coordinates; reciprocal invariants; group-invariant solutions;
variable coefficients; closed product formulas.\\
\textbf{2010 MSC}. 39A10; 39A13; 39A99.
\section{Introduction and background} \setcounter{equation}{0}
Difference equations are the discrete analogue of differential equations. They are used in mathematical biology, population dynamics, finance, numerical analysis, control theory and other
areas in which changes occur step by step. In general, they are difficult, especially when they are of higher-order. Lie symmetry analysis provides a systematic way of reducing such problems. The main idea is to find transformations that preserve the form of the system. These transformations generate canonical coordinates and invariant quantities, which can then be used to reduce the system.
\noindent The present paper is motivated by existing work on symmetry methods for difference equations. Hydon developed a systematic framework for Lie symmetries of difference equations \cite{Hydon2014}, extending earlier ideas of Maeda \cite{Maeda1987}. More recent developments include \cite{GocenMensah2024, MensahAnani2024, M2,M3,M4},
who used Lie symmetry methods to obtain analytical solutions
for higher-order nonlinear difference equations.

\noindent This work studies the system {\scriptsize
\begin{equation}
	\begin{aligned}
		x_{n+1}&=\frac{x_{n-3}y_{n-4}}
		{y_n\left(a_n+b_nx_{n-3}y_{n-4}w_{n-1}z_{n-2}\right)},\\[6pt]
		y_{n+1}&=\frac{y_{n-3}w_{n-4}}
		{w_n\left(c_n+d_ny_{n-3}w_{n-4}z_{n-1}x_{n-2}\right)},\\[6pt]
		w_{n+1}&=\frac{w_{n-3}z_{n-4}}
		{z_n\left(e_n+f_nw_{n-3}z_{n-4}x_{n-1}y_{n-2}\right)},\\[6pt]
		z_{n+1}&=\frac{z_{n-3}x_{n-4}}
		{x_n\left(g_n+h_nz_{n-3}x_{n-4}y_{n-1}w_{n-2}\right)},
	\end{aligned}
	\label{eq:main_system}
\end{equation}}
where the
initial values are chosen so that every denominator occurring in the system remains non-zero. The autonomous fifth-order sign systems studied by Elsayed and Alofi \cite{ElsayedAlofi2023}
provide the constant-coefficient systems that are recovered as special cases of the present work. For related work, see \cite{N1,M1}. The principal contributions of this paper are summarised as follows: the derivation of Lie point symmetries for the four-dimensional
	fifth-order non-autonomous system; the detailed construction of canonical coordinates and product
	invariants; the reduction of the system using the invariant quantities; the extension of the reduction through reciprocal invariant variables; the derivation of closed product formulas; the recovery of autonomous sign systems as special cases; the inclusion of a numerical illustration of the periodicity result.
\subsection{Preliminaries}
Let {$S$} be the forward shift operator defined as { $S^j(u_n)=u_{n+j}$}. A fifth-order system may be written in the form
\begin{equation}
	S^5(u_k)=\Omega_k(n,[u_k]), \qquad k=1,2,3,4,
	\label{eq:abstract_system}
\end{equation}
where $[u_k]$ denotes the dependent variables together with their required
shifts, and $\Omega_k$ are some smooth functions.
\begin{definition}
	A one-parameter local group of point transformations
	\begin{align}
	(n,u_k)\mapsto \left(n,\widetilde u_k=u_k+\varepsilon Q_k(n,u_k)+O(\varepsilon^2)\right)
	\end{align}
	is called a symmetry of \eqref{eq:abstract_system} if it maps each solution
	to another solution of the same system.
\end{definition}
\noindent The functions $Q_k(n,u_k)$ are called the characteristics of the group of transformations and $\varepsilon$ is the group parameter. The 
infinitesimal generator is
\begin{equation}
	X=\sum_{k=1}^4Q_k(n,u_k)\frac{\partial}{\partial u_k}.
\end{equation}
The fourth prolongation is
\begin{equation}
	X^{[4]}=\sum_{j=0}^4\sum_{k=1}^4S^jQ_k
	\frac{\partial}{\partial S^j(u_k)}.
\end{equation}
The linearised symmetry condition is
\begin{equation}
	S^5Q_k-X^{[4]}(\Omega_k)=0,
	\qquad k=1,2,3,4.
	\label{eq:LSC}
\end{equation}
\begin{definition}
	A smooth function $\zeta$ is an invariant function if
	\begin{equation}
		X(\zeta)=0.
	\end{equation}
\end{definition}
\noindent Canonical coordinates associated with the characteristics are given by \cite{NV}
\begin{equation}
	s_k=\int \frac{du_k}{Q_k(n,u_k)}, \qquad k=1,2,3,4.
	\label{eq:canonical}
\end{equation}
To begin, we replace $n$ by $n+4$ in \eqref{eq:main_system}, and we write
$A_n=a_{n+4}, B_n=b_{n+4}, C_n=c_{n+4}, D_n=d_{n+4}, E_n=e_{n+4},\; F_n=f_{n+4},\; G_n=g_{n+4},\; H_n=h_{n+4}.$
This yields the fully shifted form of \eqref{eq:main_system}:
\begin{equation}
	\begin{aligned}
		x_{n+5}=\Omega_1
		&=\frac{x_{n+1}y_n}
		{y_{n+4}\left(A_n+B_nx_{n+1}y_nw_{n+3}z_{n+2}\right)},\\
		y_{n+5}=\Omega_2
		&=\frac{y_{n+1}w_n}
		{w_{n+4}\left(C_n+D_ny_{n+1}w_nz_{n+3}x_{n+2}\right)},\\
		w_{n+5}=\Omega_3
		&=\frac{w_{n+1}z_n}
		{z_{n+4}\left(E_n+F_nw_{n+1}z_nx_{n+3}y_{n+2}\right)},\\
		z_{n+5}=\Omega_4
		&=\frac{z_{n+1}x_n}
		{x_{n+4}\left(G_n+H_nz_{n+1}x_ny_{n+3}w_{n+2}\right)}.
	\end{aligned}
	\label{eq:forward_shifted_full}
\end{equation}
We seek point symmetries with characteristics
$Q_1=Q_1(n,x_n),\; Q_2=Q_2(n,y_n),\; Q_3=Q_3(n,w_n)$ and $Q_4=Q_4(n,z_n).$ The aim is to apply the symmetry condition, reduce and then solve for the characteristics. The linearised symmetry condition for the first equation reads
\begin{align}
	S^5Q_1
	&-S^1Q_1\pd{\Omega_1}{x_{n+1}}
	-Q_2\pd{\Omega_1}{y_n}
	-S^3Q_3\pd{\Omega_1}{w_{n+3}}
	-S^2Q_4\pd{\Omega_1}{z_{n+2}}
	-S^4Q_2\pd{\Omega_1}{y_{n+4}}=0
\end{align}
simplifies to
\begin{align}
	S^5Q_1
	&+\frac{B_nx_{n+1}^2y_n^2\left(z_{n+2}S^3Q_3+w_{n+3}S^2Q_4\right)-A_n\left(y_nS^1Q_1+x_{n+1}Q_2\right)}{y_{n+4}(A_n+B_nx_{n+1}y_nw_{n+3}z_{n+2})^2}
	+\frac{x_{n+1}y_nS^4Q_2}{y_{n+4}^2(A_n+B_nx_{n+1}y_nw_{n+3}z_{n+2})}=0.
	\label{eq:LSC1full}
\end{align}
Applying the operator $\mathcal O_1=\frac{\partial}{\partial x_{n+1}}-\frac{y_n}{x_{n+1}}\frac{\partial}{\partial y_n}$ to \eqref{eq:LSC1full}, the cross terms cancel and
the surviving terms reduce to
\begin{equation}
	(S^1Q_1)'-Q_2'+\frac{Q_2}{y_n}-\frac{S^1Q_1}{x_{n+1}}=0.
	\label{eq:red1}
\end{equation}
\noindent The remaining operators
\begin{align}
	\mathcal O_2=\frac{\partial}{\partial y_{n+1}}-\frac{w_n}{y_{n+1}}\frac{\partial}{\partial w_n},
	\mathcal O_3=\frac{\partial}{\partial w_{n+1}}-\frac{z_n}{w_{n+1}}\frac{\partial}{\partial z_n},
	\mathcal O_4&=\frac{\partial}{\partial z_{n+1}}-\frac{x_n}{z_{n+1}}\frac{\partial}{\partial x_n}
\end{align}
 are obtained in the same way by
exploiting the cyclic structure of the original system. Each operator
is constructed to eliminate the denominator terms appearing in the
corresponding linearised symmetry condition, thereby reducing the
problem to equations involving fewer shifts.
\noindent Applying these operators gives
\begin{align}
	(S^1Q_2)'-Q_3'+\frac{Q_3}{w_n}-\frac{S^1Q_2}{y_{n+1}}&=0,\label{eq:red2}\\
	(S^1Q_3)'-Q_4'+\frac{Q_4}{z_n}-\frac{S^1Q_3}{w_{n+1}}&=0,\label{eq:red3}\\
	(S^1Q_4)'-Q_1'+\frac{Q_1}{x_n}-\frac{S^1Q_4}{z_{n+1}}&=0.
	\label{eq:red4}
\end{align}

\noindent Differentiating \eqref{eq:red1} with respect to $y_n$ gives
\begin{align}
-Q_2''+\frac1{y_n}Q_2'-\frac1{y_n^2}Q_2=0.
\end{align}
The same argument applies to all four characteristics, so
\begin{equation}
	Q_k''-\frac1{u_k}Q_k'+\frac1{u_k^2}Q_k=0,
	\qquad k=1,2,3,4.
	\label{eq:Qode}
\end{equation}
Setting $Q_k=u_k^m$ gives the indicial equation $(m-1)^2=0.$ 
Hence $m=1$ is a repeated root and the general solution of
\eqref{eq:Qode} is
\begin{align}
Q_k(n,u_k)
=
\bigl(\beta_k(n)+B_k(n)\ln|u_k|\bigr)u_k.
\end{align}
We restrict attention to the
subclass $B_k(n)=0$, which is sufficient to construct the canonical coordinates, invariants and invariant reductions developed below. Therefore,

\begin{equation}
	Q_k(n,u_k)=\beta_k(n)u_k,
	\qquad k=1,2,3,4.
	\label{eq:Qbeta}
\end{equation}
We substitute the characteristics and their shifts in the symmetry conditions. Separating coefficients in the resulting equations gives
\begin{align}
&	\beta_k(n)-\beta_k(n+4)=0, \quad k=1,2,3,4,\label{eq:coefC} \\
&\beta_1(n+1)+\beta_2(n)+\beta_3(n+3)+\beta_4(n+2)=0,\label{eq:coefA}  \\
&\beta_1(n+2)+\beta_2(n+1)+\beta_3(n)+\beta_4(n+3)=0,\label{eq:coefB}  \\
&\beta_1(n+3)+\beta_2(n+2)+\beta_3(n+1)+\beta_4(n)=0,\label{eq:coefD}  \\
&\beta_1(n)+\beta_2(n+3)+\beta_3(n+2)+\beta_4(n+1)=0,\label{eq:coefE} 
\end{align}
Equations \eqref{eq:coefC}-\eqref{eq:coefE} impose additional compatibility conditions on the coefficient functions $\beta_i$, $i=1,2,3,4$. ? It follows that 
\begin{align}
	\beta_1(n)=&m_1+m_2 i^n+m_3(-1)^n+m_4(-i)^n\\
	\beta_2(n)=&m_5+m_6 i^n+m_7(-1)^n+m_8(-i)^n\\
	\beta_3(n)=&m_9+m_{10} i^n+m_{11}(-1)^n+m_{12}(-i)^n\\
	\beta_4(n)=&-m_1-m_5-m_9+(m_6-im_2-im_{10})i^n+(m_3-m_7+m_{11})(-1)^n+(-im_4+m_8+im_{12})(-i)^n,
\end{align}
for some constants $ m_i,\; i=1,\dots, 12$.
Thus, the symmetries are given by{\scriptsize
\begin{align}
	X_1=&x_n\partial_{x_n}-z_n\partial_{z_n},\;
		X_{2}=i^nx_n\partial_{x_n}-i^{n+1} z_n\partial_{z_n},\;
		X_{3}=(-1)^nx_n\partial_{x_n}
+(-1)^nz_n\partial_{z_n},\;
		X_{4}=(-i)^nx_n\partial_{x_n}+(-i)^{n+1}z_n\partial_{z_n}\nonumber\\
		X_{5}=&y_n\partial_{y_n}
	-z_n\partial_{z_n},\, 	X_{6}=i^ny_n\partial_{y_n}
	+i^nz_n\partial_{z_n} , \;  
		X_{7}=(-1)^ny_n\partial_{y_n}+(-1)^{n+1} z_n\partial_{z_n},\;
		X_{8}=(-i)^n y_n\partial_{y_n}
	+(-i)^nz_n\partial_{z_n}\nonumber \\
		X_{9}=& w_n\partial_{w_n}-z_n\partial_{z_n},\;
		X_{10}=i^n w_n\partial_{w_n}-i^{n+1}z_n\partial_{z_n},\;
		X_{11}=(-1)^nw_n\partial_{w_n}+(-1)^nz_n\partial_{z_n},\nonumber\\
		X_{12}=&(-i)^nw_n\partial_{w_n}-(-i)^{n+1}z_n\partial_{z_n}.
\end{align}}


The canonical coordinates are
\begin{align}
	s_1(n)&=\int\frac{dx_n}{Q_1(n,x_n)}=\frac{1}{\beta_1(n)} \ln|x_n|,\qquad s_2(n)=\frac{1}{\beta_2(n)}\ln|y_n|, \qquad 
	s_3(n)=\frac{1}{\beta_3(n)}\ln|w_n|,\qquad s_4(n)=\frac{1}{\beta_4(n)}\ln|z_n|.
\end{align}


\noindent The relation 
$	\beta_1(n)+\beta_2(n+3)+\beta_3(n+2)+\beta_4(n+1)=0$
obtained from the determining equations provides a natural guide for
constructing invariant quantities. \par \noindent Consider the logarithmic combination

\begin{equation}
\beta_1(n)s_1(n)+\beta_2(n+3)s_2(n+3)+\beta_3(n+2)s_3(n+2)+\beta_4(n+1)s_4(n+1)=	\ln|x_n|
	+\ln|y_{n+3}|
	+\ln|w_{n+2}|
	+\ln|z_{n+1}|.\label{lnn}
\end{equation}
Applying the corresponding prolonged symmetry generator to \eqref{lnn} gives
\begin{align}
	X\Big(
	\ln|x_n|
	+\ln|y_{n+3}|
	+\ln|w_{n+2}|
	+\ln|z_{n+1}|
	\Big)
	&=
	\beta_1(n)
	+\beta_2(n+3)
	+\beta_3(n+2)
	+\beta_4(n+1)
\nonumber	\\
	&=0.
\end{align}
Hence this combination is invariant under the admitted Lie symmetry. Therefore the invariant is directly determined by the symmetry
constraint obtained from the coefficient matching process. Exponentiating the logarithmic invariant produces
a multiplicative invariant, which is more convenient for the
subsequent reduction procedure. \par \noindent
By cyclic permutation of the variables, three additional logarithmic
invariants are obtained. Exponentiation of these logarithmic
expressions and taking the reciprocals produce the product invariants 
{\scriptsize \begin{align}
	\bar X_n=
	\frac1{x_ny_{n+3}w_{n+2}z_{n+1}},
	\bar Y_n=
	\frac1{x_{n+1}y_nw_{n+3}z_{n+2}},
	\bar W_n=
	\frac1{x_{n+2}y_{n+1}w_nz_{n+3}},
	\bar Z_n=
	\frac1{x_{n+3}y_{n+2}w_{n+1}z_n},
	\label{changezbar}
\end{align}}
used throughout the reduction procedure. It can be shown that 
\begin{align}
	\bar Z_{n+2}
	&=A_n\bar Y_n+B_n.
	\label{eq:barZ2}
\end{align}
Similarly,
\begin{align}
	\bar Y_{n+2}=E_n\bar Z_n+F_n,\;
	\bar W_{n+2}=G_n\bar X_n+H_n,\;
	\bar X_{n+2}=C_n\bar W_n+D_n.\label{eq:barX2}
\end{align}
Equations \eqref{eq:barZ2} and \eqref{eq:barX2} constitute a coupled
family of affine two-step recurrence relations for the reciprocal
invariants. 
\noindent Shifting \eqref{eq:barZ2} by two, we get
\begin{align}
	\bar Z_{n+4}=&A_{n+2}\bar Y_{n+2}+B_{n+2}\\
	=&A_{n+2}E_n\bar Z_n+(A_{n+2}F_n+B_{n+2}).
\end{align}
Thus
\begin{equation}
	\bar Z_{n+4}=\Theta_n^z\bar Z_n+\Lambda_n^z,
	\qquad \text{where}\qquad
	\Theta_n^z=A_{n+2}E_n,
	\qquad
	\Lambda_n^z=A_{n+2}F_n+B_{n+2}.
	\label{eq:Zfour}
\end{equation}
Similarly,
\begin{align}
	\bar X_{n+4}&=\Theta_n^x\bar X_n+\Lambda_n^x,
	& \text{where}\qquad\Theta_n^x&=C_{n+2}G_n,
	&\Lambda_n^x&=C_{n+2}H_n+D_{n+2},\label{eq:Xfour}\\
	\bar Y_{n+4}&=\Theta_n^y\bar Y_n+\Lambda_n^y,
	&\text{where}\qquad \Theta_n^y&=E_{n+2}A_n,
	&\Lambda_n^y&=E_{n+2}B_n+F_{n+2},\label{eq:Yfour}\\
	\bar W_{n+4}&=\Theta_n^w\bar W_n+\Lambda_n^w,
	&\text{where}\qquad\Theta_n^w&=G_{n+2}C_n,
	&\Lambda_n^w&=G_{n+2}D_n+H_{n+2}.\label{eq:Wfour}
\end{align}

\subsection{Solving the generic recurrence}

All four recurrences have the form
\begin{equation}
	U_{n+4}=\Theta_nU_n+\Lambda_n.
	\label{eq:genericU}
\end{equation}
Although the original system is fifth-order and therefore requires
five initial values, the invariant reduction produces four-step
recurrence relations for the reciprocal invariants. A simple iteration gives
\begin{equation}
	U_{4s+j}=U_j\prod_{t=0}^{s-1}\Theta_{4t+j}
	+\sum_{\ell=0}^{s-1}\left(\Lambda_{4\ell+j}
	\prod_{t=\ell+1}^{s-1}\Theta_{4t+j}\right),
	\label{eq:generic_solution}
\end{equation}
where an empty product is $1$.



\noindent From equation 
 \eqref{changezbar}, it follows that
\begin{align}
	x_{n+4}=&x_n\frac{\bar X_n}{\bar Z_{n+1}},\;
	y_{n+4}=y_n\frac{\bar Y_n}{\bar X_{n+1}},\;
	w_{n+4}=w_n\frac{\bar W_n}{\bar Y_{n+1}},\;
	z_{n+4}=z_n\frac{\bar Z_n}{\bar W_{n+1}}.\label{eq:zrecover}
\end{align}
Iterating the first expression in \eqref{eq:zrecover} gives the closed product formula
{\scriptsize
\begin{align}
	&x_{4n+j}=x_j\prod_{s=0}^{n-1}\frac{\bar{X}_{4s+j}}{\bar{Z}_{4s+j+1}}, j=0,1,2,3\nonumber\\
	&=x_j\prod\limits_{s=0}^{n-1}\frac{\bar{X}_{j}\left( \prod\limits_{k_1=0}^{s-1}\theta^x_{4k_1+j}\right)+\sum\limits_{l=0}^{s-1}\left(\Lambda_{4l+j}^x \prod\limits_{k_2=l+1}^{s-1}\theta_{4k_2+j}^x\right)}{\bar{Z}_{j+1}\left( \prod\limits_{k_1=0}^{s-1}\theta^z_{4k_1+j+1}\right)+\sum\limits_{l=0}^{s-1}\left(\Lambda_{4l+j+1}^z \prod\limits_{k_2=l+1}^{s-1}\theta_{4k_2+j+1}^z\right)}\nonumber\\&= \frac{x_{j+4}^n}{x_j^{n-1}}\prod\limits_{s=0}^{n-1}\frac{\left( \prod\limits_{k_1=0}^{s-1}\theta^x_{4k_1+j}\right)+{X}_{j}\sum\limits_{l=0}^{s-1}\left(\Lambda_{4l+j}^x \prod\limits_{k_2=l+1}^{s-1}\theta_{4k_2+j}^x\right)}{\left( \prod\limits_{k_1=0}^{s-1}\theta^z_{4k_1+j+1}\right)+{Z}_{j+1}\sum\limits_{l=0}^{s-1}\left(\Lambda_{4l+j+1}^z \prod\limits_{k_2=l+1}^{s-1}\theta_{4k_2+j+1}^z\right)}\nonumber
	\\&= \frac{x_{j+4}^n}{x_j^{n-1}}\;\;\;\mathclap{\prod\limits_{s=0}^{n-1}}\;\;\;\frac{\left(\;\;\;\mathclap{ \prod\limits_{k_1=0}^{s-1}}\;\;C_{4k_1+j+2}G_{4k_1+j}\right)+x_jy_{j+3}w_{j+2}z_{j+1}\sum\limits_{l=0}^{s-1}\left((C_{4l+j+2}H_{4l+j}+D_{4l+j+2}) \prod\limits_{k_2=l+1}^{s-1}C_{4k_2+j+2}G_{4k_2+j}\right)}{\left(\;\;\;\mathclap{ \prod\limits_{k_1=0}^{s-1}}\;\;A_{4k_1+j+3}E_{4k_1+j+1}\right)+x_{j+4}y_{j+3}w_{j+2}z_{j+1}\;\;\mathclap{\sum\limits_{l=0}^{s-1}}\;\;\left((A_{4l+j+3}F_{4l+j+1}+B_{4l+j+3})\;\;\;\mathclap{ \prod\limits_{k_2=l+1}^{s-1}}\;\;A_{4k_2+j+3}E_{4k_2+j+1}\right)}
	\label{eq:xclosed}
\end{align}}
for $j=0,1,2$; and {\scriptsize
\begin{align}
	&x_{4n+3}=x_3\prod_{s=0}^{n-1}\frac{\bar{X}_{4s+3}}{\bar{Z}_{4(s+1)}}\nonumber\\
	&=x_3\prod\limits_{s=0}^{n-1}\frac{\bar{X}_{3}\left( \prod\limits_{k_1=0}^{s-1}\theta^x_{4k_1+3}\right)+\sum\limits_{l=0}^{s-1}\left(\Lambda_{4l+3}^x \prod\limits_{k_2=l+1}^{s-1}\theta_{4k_2+3}^x\right)}{\bar{Z}_{0}\left( \prod\limits_{k_1=0}^{s}\theta^z_{4k_1}\right)+\sum\limits_{l=0}^{s}\left(\Lambda_{4l}^z \prod\limits_{k_2=l+1}^{s}\theta_{4k_2}^z\right)}\nonumber\\
	&=\frac{x_3y_2^nw_1^nz_0^n}{y_6^nw_5^nz_4^n}\;\;{\prod\limits_{s=0}^{n-1}}\;\;\frac{\left( \prod\limits_{k_1=0}^{s-1}C_{4k_1+5}G_{4k_1+3}\right)+x_3y_{6}w_{5}z_{4}\sum\limits_{l=0}^{s-1}\left((C_{4l+5}H_{4l+3}+D_{4l+5}) \prod\limits_{k_2=l+1}^{s-1}C_{4k_2+5}G_{4k_2+3}\right)}{\left( \prod\limits_{k_1=0}^{s}A_{4k_1+2}E_{4k_1}\right)+x_3y_2w_1z_0\sum\limits_{l=0}^{s}\left((A_{4l+2}F_{4l}+B_{4l+2}) \prod\limits_{k_2=l+1}^{s}(A_{4k_2+2}E_{4k_2})\right)}.\label{eq:x3closed}
	\end{align}}
Similarly,{\scriptsize
\begin{align}
	y_{4n+j}&= \frac{y_{j+4}^n}{y_j^{n-1}}\prod\limits_{s=0}^{n-1}\frac{\left( \prod\limits_{k_1=0}^{s-1}E_{4k_1+j+2}A_{4k_1+j}\right)+x_{j+1}y_{j}w_{j+3}z_{j+2}\sum\limits_{l=0}^{s-1}\left((E_{4l+j+2}B_{4l+j}+F_{4l+j+2}) \prod\limits_{k_2=l+1}^{s-1}E_{4k_2+j+2}A_{4k_2+j}\right)}{\left( \prod\limits_{k_1=0}^{s-1}C_{4k_1+j+3}G_{4k_1+j+1}\right)+x_{j+1}y_{j+4}w_{j+3}z_{j+2}\sum\limits_{l=0}^{s-1}\left((C_{4l+j+3}H_{4l+j+1}+D_{4l+j+3}) \prod\limits_{k_2=l+1}^{s-1}C_{4k_2+j+3}G_{4k_2+j+1}\right)}
	\label{eq:yclosed}\end{align}}
	for $j=0,1,2$ and {\scriptsize
		\begin{align}
			&y_{4n+3}=\frac{y_3x_0^nw_2^nz_1^n}{x_4^nw_6^nz_5^n}\prod\limits_{s=0}^{n-1}\frac{\left( \prod\limits_{k_1=0}^{s-1}E_{4k_1+5}A_{4k_1+3}\right)+x_4y_{3}w_{6}z_{5}\sum\limits_{l=0}^{s-1}\left((E_{4l+5}B_{4l+3}+F_{4l+5}) \prod\limits_{k_2=l+1}^{s-1}E_{4k_2+5}A_{4k_2+3}\right)}{\left( \prod\limits_{k_1=0}^{s}C_{4k_1+2}G_{4k_1}\right)+x_0y_3w_2z_1\sum\limits_{l=0}^{s}\left((C_{4l+2}H_{4l}+D_{4l+2}) \prod\limits_{k_2=l+1}^{s}(C_{4k_2+2}G_{4k_2})\right)};\label{eq:y3closed}
		\end{align}
		\begin{align}
			w_{4n+j}&= \frac{w_{j+4}^n}{w_j^{n-1}}\prod\limits_{s=0}^{n-1}\frac{\left( \prod\limits_{k_1=0}^{s-1}C_{4k_1+j}G_{4k_1+j+2}\right)+x_{j+2}y_{j+1}w_{j}z_{j+3}\sum\limits_{l=0}^{s-1}\left((D_{4l+j}G_{4l+j+2}+H_{4l+j+2}) \prod\limits_{k_2=l+1}^{s-1}C_{4k_2+j}G_{4k_2+j+2}\right)}{\left( \prod\limits_{k_1=0}^{s-1}E_{4k_1+j+3}A_{4k_1+j+1}\right)+x_{j+2}y_{j+1}w_{j+4}z_{j+3}\sum\limits_{l=0}^{s-1}\left((E_{4l+j+3}B_{4l+j+1}+F_{4l+j+3}) \prod\limits_{k_2=l+1}^{s-1}E_{4k_2+j+3}A_{4k_2+j+1}\right)}
			\label{eq:wclosed}\end{align}}
		for $j=0,1,2$ and {\scriptsize
		\begin{align}
			&w_{4n+3}=\frac{w_3x_1^ny_0^nz_2^n}{x_5^ny_4^nz_6^n}\prod\limits_{s=0}^{n-1}\frac{\left( \prod\limits_{k_1=0}^{s-1}C_{4k_1+3}G_{4k_1+5}\right)+x_5y_{4}w_{3}z_{6}\sum\limits_{l=0}^{s-1}\left((D_{4l+3}G_{4l+5}+H_{4l+5}) \prod\limits_{k_2=l+1}^{s-1}C_{4k_2+3}G_{4k_2+5}\right)}{\left( \prod\limits_{k_1=0}^{s}E_{4k_1+2}A_{4k_1}\right)+x_1y_0w_3z_2\sum\limits_{l=0}^{s}\left((E_{4l+2}B_{4l}+F_{4l+2}) \prod\limits_{k_2=l+1}^{s}(E_{4k_2+2}A_{4k_2})\right)}\label{eq:w3closed};
		\end{align}
\begin{align}
	z_{4n+j}&= \frac{z_{j+4}^n}{z_j^{n-1}}\prod\limits_{s=0}^{n-1}\frac{\left( \prod\limits_{k_1=0}^{s-1}A_{4k_1+j+2}E_{4k_1+j}\right)+x_{j+3}y_{j+2}w_{j+1}z_{j}\sum\limits_{l=0}^{s-1}\left((A_{4l+j+2}F_{4l+j}+B_{4l+j+2}) \prod\limits_{k_2=l+1}^{s-1}A_{4k_2+j+2}E_{4k_2+j}\right)}{\left( \prod\limits_{k_1=0}^{s-1}C_{4k_1+j+1}G_{4k_1+j+3}\right)+x_{j+3}y_{j+2}w_{j+1}z_{j+4}\sum\limits_{l=0}^{s-1}\left((G_{4l+j+3}D_{4l+j+1}+H_{4l+j+3}) \prod\limits_{k_2=l+1}^{s-1}C_{4k_2+j+1}G_{4k_2+j+3}\right)}
	\label{eq:zclosed}\end{align}}
for $j=0,1,2$ and {\scriptsize
\begin{align}
	&z_{4n+3}=\frac{z_3x_2^ny_1^nw_0^n}{x_6^ny_5^nw_4^n}\prod\limits_{s=0}^{n-1}\frac{\left( \prod\limits_{k_1=0}^{s-1}A_{4k_1+5}E_{4k_1+3}\right)+x_6y_{5}w_{4}z_{3}\sum\limits_{l=0}^{s-1}\left((A_{4l+5}F_{4l+3}+B_{4l+5}) \prod\limits_{k_2=l+1}^{s-1}A_{4k_2+5}E_{4k_2+3}\right)}{\left( \prod\limits_{k_1=0}^{s}C_{4k_1}G_{4k_1+2}\right)+x_2y_1w_0z_3\sum\limits_{l=0}^{s}\left((G_{4l+2}D_{4l}+H_{4l+2}) \prod\limits_{k_2=l+1}^{s}(C_{4k_2}G_{4k_2+2})\right)}.\label{eq:z3closed}
\end{align}}
The solution of \eqref{eq:forward_shifted_full}  is given by \eqref{eq:xclosed}--\eqref{eq:z3closed}. 
\section{Autonomous cases and recovery of some results in the literature}
\subsection{Autonomous cases}
Let $A_n=A,\,B_n=B, \,C_n=C,\, D_n=D,\,E_n=E,\,F_n=F,\, G_n=G$ and $H_n=H$. Equations \eqref{eq:xclosed}-\eqref{eq:x3closed} simplify to (we have used \eqref{eq:forward_shifted_full} to rewrite $x_5$, $x_6$, $y_6$, $w_5$ in terms of the initial conditions  viz $x_i,y_i, w_i, z_i, \,i=0,1,2,3,4,5$):
{\scriptsize
	\begin{align}
		x_{4n} =& \frac{x_{4}^n}{x_0^{n-1}}\prod\limits_{s=0}^{n-1}\frac{\left( CG\right)^s+x_0y_{3}w_{2}z_{1}(CH+D)\sum\limits_{l=0}^{s-1}\left( CG\right)^l}{\left( AE\right)^s+x_{4}y_{3}w_{2}z_{1}(AF+B)\sum\limits_{l=0}^{s-1}\left( AE\right)^l},\nonumber \\
		=&\frac{x_{4}^n}{x_0^{n-1}}\prod\limits_{s=1}^{n-1}\frac{\left( CG\right)^s+x_0y_{3}w_{2}z_{1}(CH+D)\sum\limits_{l=0}^{s-1}\left( CG\right)^l}{\left( AE\right)^s+x_{4}y_{3}w_{2}z_{1}(AF+B)\sum\limits_{l=0}^{s-1}\left( AE\right)^l}, 
		\label{eq:x0closedcst}\\
	x_{4n+1} =& \frac{x_{5}^n}{x_1^{n-1}}\prod\limits_{s=0}^{n-1}\frac{\left( CG\right)^s+x_1y_{4}w_{3}z_{2}(CH+D)\sum\limits_{l=0}^{s-1}\left( CG\right)^l}{\left( AE\right)^s+x_{5}y_{4}w_{3}z_{2}(AF+B)\sum\limits_{l=0}^{s-1}\left( AE\right)^l}, \nonumber\\
	=&\frac{x_{1}y_0^n}{y_4^n}\prod\limits_{s=0}^{n-1}\frac{\left( CG\right)^s+x_1y_{4}w_{3}z_{2}(CH+D)\sum\limits_{l=0}^{s-1}\left( CG\right)^l}{A\left( AE\right)^s+x_{1}y_{0}w_{3}z_{2}[B(AE)^s+(AF+B)\sum\limits_{l=0}^{s-1}\left( AE\right)^l]}
	\label{eq:x1closedcst}
\end{align}
\begin{align}
	x_{4n+2} =& \frac{x_{6}^n}{x_2^{n-1}}\prod\limits_{s=0}^{n-1}\frac{\left( CG\right)^s+x_2y_{5}w_{4}z_{3}(CH+D)\sum\limits_{l=0}^{s-1}\left( CG\right)^l}{\left( AE\right)^s+x_{6}y_{5}w_{4}z_{3}(AF+B)\sum\limits_{l=0}^{s-1}\left( AE\right)^l}, \nonumber\\
	=&\frac{x_{2}w_4^n}{w_0^{n}}\prod\limits_{s=0}^{n-1}\frac{C\left( CG\right)^s+x_2y_{1}w_{0}z_{3}[D(CG)^s+(CH+D)\sum\limits_{l=0}^{s-1}\left( CG\right)^l]}{A\left( AE\right)^s+x_{2}y_{1}w_{4}z_{3}[B(AE)^s+(AF+B)\sum\limits_{l=0}^{s-1}\left( AE\right)^l]}, 
	\label{eq:x2closedcst}\\
		x_{4n+3}=&\frac{x_3y_2^nw_1^nz_0^n}{y_6^nw_5^nz_4^n}\prod\limits_{s=0}^{n-1}\frac{\left(CG\right)^s+x_3y_{6}w_{5}z_{4}(CH+D)\sum\limits_{l=0}^{s-1}\left(CG\right)^l}{\left( AE\right)^{s+1}+x_3y_2w_1z_0(AF+B)\sum\limits_{l=0}^{s}\left( AE\right)^l}\nonumber\\
		=&\frac{x_3z_0^n}{z_4^n}\prod\limits_{s=0}^{n-1}\frac{C\left(CG\right)^s+x_3y_{2}w_{1}z_{4}[D(CG)^s+(CH+D)\sum\limits_{l=0}^{s-1}\left(CG\right)^l]}{\left( AE\right)^{s+1}+x_3y_2w_1z_0(AF+B)\sum\limits_{l=0}^{s}\left( AE\right)^l}.
		\label{eq:x3closedcst}
\end{align}}
Similarly, {\scriptsize
\begin{align}
	y_{4n}=& \frac{y_{4}^n}{y_0^{n-1}}\prod\limits_{s=0}^{n-1}\frac{\left(AE\right)^s+x_{1}y_{0}w_{3}z_{2}(BE+F)\sum\limits_{l=0}^{s-1}\left(AE\right)^l}{\left(CG\right)^s+x_{1}y_{4}w_{3}z_{2}(CH+D)\sum\limits_{l=0}^{s-1}\left(CG\right)^l}, 
	\label{eq:y0closedcst}\\
	y_{4n+1}=& \frac{y_{1}w_0^{n}}{w_4^n}\prod\limits_{s=0}^{n-1}\frac{\left(AE\right)^s+x_{2}y_{1}w_{4}z_{3}(BE+F)\sum\limits_{l=0}^{s-1}\left(AE\right)^l}{C\left(CG\right)^s+x_{2}y_{1}w_{0}z_{3}[D(CG)^s+(CH+D)\sum\limits_{l=0}^{s-1}\left(CG\right)^l]},	\label{eq:y1closedcst}\\
	y_{4n+2}=& \frac{y_{2}z_4^n}{z_0^{n}}\prod\limits_{s=0}^{n-1}\frac{E\left(AE\right)^s+x_{3}y_{2}w_{1}z_{0}[F(AE)^s+(BE+F)\sum\limits_{l=0}^{s-1}\left(AE\right)^l}{C\left(CG\right)^s+x_{3}y_{2}w_{1}z_{4}[D(CG)^s+(CH+D)\sum\limits_{l=0}^{s-1}\left(CG\right)^l]},\label{eq:y2closedcst}\\
	y_{4n+3}=&\frac{y_3x_0^n}{x_4^n}\prod\limits_{s=0}^{n-1}\frac{E\left( AE\right)^s+x_4y_{3}w_{2}z_{1}[F(AE)^s+(BE+F)\sum\limits_{l=0}^{s-1}\left( AE\right)^l]}{\left(CG\right)^{s+1}+x_0y_3w_2z_1(CH+D)\sum\limits_{l=0}^{s}\left(CG\right)^l};\label{eq:y3closedcst}\\
	w_{4n}&= \frac{w_{4}^n}{w_0^{n-1}}\prod\limits_{s=0}^{n-1}\frac{\left(CG\right)^s+x_{2}y_{1}w_{0}z_{3}(DG+H)\sum\limits_{l=0}^{s-1}\left(CG\right)^l}{\left(AE\right)^s+x_{2}y_{1}w_{4}z_{3}(BE+F)\sum\limits_{l=0}^{s-1}\left(AE\right)^l},\label{eq:w0closedcst}\\
	w_{4n+1}&= \frac{w_1z_0^n}{z_4^n}\prod\limits_{s=0}^{n-1}\frac{\left(CG\right)^s+x_{3}y_{2}w_{1}z_{4}(DG+H)\sum\limits_{l=0}^{s-1}\left(CG\right)^l}{E\left(AE\right)^s+x_{3}y_{2}w_{1}z_{0}[F(AE)^s+(BE+F)\sum\limits_{l=0}^{s-1}\left(AE\right)^l]}, \label{eq:w1closedcst}\\
	w_{4n+2}&= \frac{w_2x_{4}^n}{x_0^{n}}\prod\limits_{s=0}^{n-1}\frac{G\left(CG\right)^s+x_{0}y_{3}w_{2}z_{1}[H(CG)^s+(DG+H)\sum\limits_{l=0}^{s-1}\left(CG\right)^l]}{E\left(AE\right)^s+x_{4}y_{3}w_{2}z_{1}[F(AE)^s+(BE+F)\sum\limits_{l=0}^{s-1}\left(AE\right)^l]},\label{eq:w2closedcst}\\
	&w_{4n+3}=\frac{w_3y_0^n}{y_4^n}\prod\limits_{s=0}^{n-1}\frac{G\left(CG\right)^s+x_1y_{4}w_{3}z_{2}[H(CG)^s+(DG+H)\sum\limits_{l=0}^{s-1}\left(CG\right)^l]}{\left( AE\right)^{s+1}+x_1y_0w_3z_2(BE+F)\sum\limits_{l=0}^{s}\left(AE\right)^l}\label{eq:w3closedcst}
\end{align}
\begin{align}
	z_{4n}&= \frac{z_{4}^n}{z_0^{n-1}}\prod\limits_{s=0}^{n-1}\frac{\left(AE\right)^s+x_{3}y_{2}w_{1}z_{0}(AF+B)\sum\limits_{l=0}^{s-1}\left(AE\right)^l}{\left(CG\right)^s+x_{3}y_{2}w_{1}z_{4}(DG+H)\sum\limits_{l=0}^{s-1}\left(CG\right)^l},
	\label{eq:z0closedcst}\\
	z_{4n+1}&= \frac{z_1x_0^n}{x_4^{n}}\prod\limits_{s=0}^{n-1}\frac{\left(AE\right)^s+x_{4}y_{3}w_{2}z_{1}(AF+B)\sum\limits_{l=0}^{s-1}\left(AE\right)^l}{G\left(CG\right)^s+x_{0}y_{3}w_{2}z_{1}[H(CG)^s+(DG+H)\sum\limits_{l=0}^{s-1}\left(CG\right)^l]},	\label{eq:z1closedcst}\\
	z_{4n+2}&= \frac{z_2y_{4}^n}{y_0^{n}}\prod\limits_{s=0}^{n-1}\frac{A\left(AE\right)^s+x_{1}y_{0}w_{3}z_{2}[B(AE)^s+(AF+B)\sum\limits_{l=0}^{s-1}\left(AE\right)^l]}{G\left(CG\right)^s+x_{1}y_{4}w_{3}z_{2}[H(CG)^s+(DG+H)\sum\limits_{l=0}^{s-1}\left(CG\right)^l]},\label{eq:z2closedcst}\\
	z_{4n+3}=&\frac{z_3w_0^n}{w_4^n}\prod\limits_{s=0}^{n-1}\frac{A\left(AE\right)^s+x_2y_{1}w_{4}z_{3}[B(AE)^s+(AF+B)\sum\limits_{l=0}^{s-1}\left(AE\right)^l]}{\left(CG\right)^{s+1}+x_2y_1w_0z_3(DG+H)\sum\limits_{l=0}^{s}\left(CG\right)^l}.\label{eq:z3closedcst}
\end{align}}
The solutions obtained above are expressed in terms of indices shifted forward by four units. To recover the solutions of the original system considered in \eqref{eq:main_system}, it is sufficient to reverse these four index shifts. 
In other words, the solutions of  \eqref{eq:main_system}, for constant coefficients, are then given by
{\scriptsize	\begin{align}
		x_{4n-4}=&\frac{x_{0}^n}{x_{-4}^{n-1}}\prod\limits_{s=1}^{n-1}\frac{\left( CG\right)^s+x_{-4}y_{-1}w_{-2}z_{-3}(CH+D)\sum\limits_{l=0}^{s-1}\left( CG\right)^l}{\left( AE\right)^s+x_{0}y_{-1}w_{-2}z_{-3}(AF+B)\sum\limits_{l=0}^{s-1}\left( AE\right)^l}, 
		\label{eq:x0closedcstback}\\
		x_{4n-3}=&\frac{x_{-3}y_{-4}^n}{y_0^n}\prod\limits_{s=0}^{n-1}\frac{\left( CG\right)^s+x_{-3}y_{0}w_{-1}z_{-2}(CH+D)\sum\limits_{l=0}^{s-1}\left( CG\right)^l}{A\left( AE\right)^s+x_{-3}y_{-4}w_{-1}z_{-2}[B(AE)^s+(AF+B)\sum\limits_{l=0}^{s-1}\left( AE\right)^l]},
		\label{eq:x1closedcstback}\\
		x_{4n-2}=&\frac{x_{-2}w_0^n}{w_{-4}^{n}}\prod\limits_{s=0}^{n-1}\frac{C\left( CG\right)^s+x_{-2}y_{-3}w_{-4}z_{-1}[D(CG)^s+(CH+D)\sum\limits_{l=0}^{s-1}\left( CG\right)^l]}{A\left( AE\right)^s+x_{-2}y_{-3}w_{0}z_{-1}[B(AE)^s+(AF+B)\sum\limits_{l=0}^{s-1}\left( AE\right)^l]}, 
		\label{eq:x2closedcstback}\\
				x_{4n-1}=&\frac{x_{-1}z_{-4}^n}{z_0^n}\prod\limits_{s=0}^{n-1}\frac{C\left(CG\right)^s+x_{-1}y_{-2}w_{-3}z_{0}[D(CG)^s+(CH+D)\sum\limits_{l=0}^{s-1}\left(CG\right)^l]}{\left( AE\right)^{s+1}+x_{-1}y_{-2}w_{-3}z_{-4}(AF+B)\sum\limits_{l=0}^{s}\left( AE\right)^l};
				\label{eq:x3closedcstback}
		\end{align}
		\begin{align}
			y_{4n-4}=& \frac{y_{0}^n}{y_{-4}^{n-1}}\prod\limits_{s=0}^{n-1}\frac{\left(AE\right)^s+x_{-3}y_{-4}w_{-1}z_{-2}(BE+F)\sum\limits_{l=0}^{s-1}\left(AE\right)^l}{\left(CG\right)^s+x_{-3}y_{0}w_{-1}z_{-2}(CH+D)\sum\limits_{l=0}^{s-1}\left(CG\right)^l}, 
			\label{eq:y0closedcstback}\\
			y_{4n-3}=& \frac{y_{-3}w_{-4}^{n}}{w_0^n}\prod\limits_{s=0}^{n-1}\frac{\left(AE\right)^s+x_{-2}y_{-3}w_{0}z_{-1}(BE+F)\sum\limits_{l=0}^{s-1}\left(AE\right)^l}{C\left(CG\right)^s+x_{-2}y_{-3}w_{-4}z_{-1}[D(CG)^s+(CH+D)\sum\limits_{l=0}^{s-1}\left(CG\right)^l]},	\label{eq:y1closedcstback}\\
			y_{4n-2}=& \frac{y_{-2}z_0^n}{z_{-4}^{n}}\prod\limits_{s=0}^{n-1}\frac{E\left(AE\right)^s+x_{-1}y_{-2}w_{-3}z_{-4}[F(AE)^s+(BE+F)\sum\limits_{l=0}^{s-1}\left(AE\right)^l}{C\left(CG\right)^s+x_{-1}y_{-2}w_{-3}z_{0}[D(CG)^s+(CH+D)\sum\limits_{l=0}^{s-1}\left(CG\right)^l]},\label{eq:y2closedcstback}\\
			y_{4n-1}=&\frac{y_{-1}x_{-4}^n}{x_0^n}\prod\limits_{s=0}^{n-1}\frac{E\left( AE\right)^s+x_0y_{-1}w_{-4}z_{-3}[F(AE)^s+(BE+F)\sum\limits_{l=0}^{s-1}\left( AE\right)^l]}{\left(CG\right)^{s+1}+x_{-4}y_{-1}w_{-2}z_{-3}(CH+D)\sum\limits_{l=0}^{s}\left(CG\right)^l};\label{eq:y3closedcstback}
		\end{align}
		\begin{align}
			w_{4n-4}&= \frac{w_{0}^n}{w_{-4}^{n-1}}\prod\limits_{s=0}^{n-1}\frac{\left(CG\right)^s+x_{-2}y_{-3}w_{-4}z_{-1}(DG+H)\sum\limits_{l=0}^{s-1}\left(CG\right)^l}{\left(AE\right)^s+x_{-2}y_{-3}w_{0}z_{-1}(BE+F)\sum\limits_{l=0}^{s-1}\left(AE\right)^l},\label{eq:w0closedcstback}\\
			w_{4n-3}&= \frac{w_{-3}z_{-4}^n}{z_0^n}\prod\limits_{s=0}^{n-1}\frac{\left(CG\right)^s+x_{-1}y_{-2}w_{-3}z_{0}(DG+H)\sum\limits_{l=0}^{s-1}\left(CG\right)^l}{E\left(AE\right)^s+x_{-1}y_{-2}w_{-3}z_{-4}[F(AE)^s+(BE+F)\sum\limits_{l=0}^{s-1}\left(AE\right)^l]}, \label{eq:w1closedcstback}\\
			w_{4n-2}&= \frac{w_{-2}x_{0}^n}{x_{-4}^{n}}\prod\limits_{s=0}^{n-1}\frac{G\left(CG\right)^s+[H(CG)^s+(DG+H)\sum\limits_{l=0}^{s-1}\left(CG\right)^l]x_{-4}y_{-1}w_{-2}z_{-3}}{E\left(AE\right)^s+x_{0}y_{-1}w_{-2}z_{-3}[F(AE)^s+(BE+F)\sum\limits_{l=0}^{s-1}\left(AE\right)^l]},\label{eq:w2closedcstback}\\
			w_{4n-1}=&\frac{w_{-1}y_{-4}^n}{y_0^n}\prod\limits_{s=0}^{n-1}\frac{G\left(CG\right)^s+x_{-3}y_{0}w_{-1}z_{-2}[H(CG)^s+(DG+H)\sum\limits_{l=0}^{s-1}\left(CG\right)^l]}{\left( AE\right)^{s+1}+x_{-3}y_{-4}w_{-1}z_{-2}(BE+F)\sum\limits_{l=0}^{s}\left(AE\right)^l};\label{eq:w3closedcstback}
		\end{align}
		\begin{align}
			z_{4n-4}&= \frac{z_{0}^n}{z_{-4}^{n-1}}\prod\limits_{s=0}^{n-1}\frac{\left(AE\right)^s+x_{-1}y_{-2}w_{-3}z_{-4}(AF+B)\sum\limits_{l=0}^{s-1}\left(AE\right)^l}{\left(CG\right)^s+x_{-1}y_{-2}w_{-3}z_{0}(DG+H)\sum\limits_{l=0}^{s-1}\left(CG\right)^l},
			\label{eq:z0closedcstback}\\
			z_{4n-3}&= \frac{z_{-3}x_{-4}^n}{x_0^{n}}\prod\limits_{s=0}^{n-1}\frac{\left(AE\right)^s+x_{0}y_{-1}w_{-2}z_{-3}(AF+B)\sum\limits_{l=0}^{s-1}\left(AE\right)^l}{G\left(CG\right)^s+x_{-4}y_{-1}w_{-2}z_{-3}[H(CG)^s+(DG+H)\sum\limits_{l=0}^{s-1}\left(CG\right)^l]},	\label{eq:z1closedcstback}\\
			z_{4n-2}&= \frac{z_{-2}y_{0}^n}{y_{-4}^{n}}\prod\limits_{s=0}^{n-1}\frac{A\left(AE\right)^s+x_{-3}y_{-4}w_{-1}z_{-2}[B(AE)^s+(AF+B)\sum\limits_{l=0}^{s-1}\left(AE\right)^l]}{G\left(CG\right)^s+x_{-3}y_{0}w_{-1}z_{-2}[H(CG)^s+(DG+H)\sum\limits_{l=0}^{s-1}\left(CG\right)^l]},\label{eq:z2closedcstback}\\
			z_{4n-1}=&\frac{z_{-1}w_{-4}^n}{w_0^n}\prod\limits_{s=0}^{n-1}\frac{A\left(AE\right)^s+x_{-2}y_{-3}w_{0}z_{-1}[B(AE)^s+(AF+B)\sum\limits_{l=0}^{s-1}\left(AE\right)^l]}{\left(CG\right)^{s+1}+x_{-2}y_{-3}w_{-3}z_{-1}(DG+H)\sum\limits_{l=0}^{s}\left(CG\right)^l}.\label{eq:z3closedcstback}
		\end{align}}
For the sake of comparison with the results in \cite{ElsayedAlofi2023}, we made the following substitutions:
 %
$n\mapsto n+2$ in
$x_{4n-4},y_{4n-4},w_{4n-4},z_{4n-4}$, and
$n\mapsto n+1$ in each of
$x_{4n-3},y_{4n-3},w_{4n-3},z_{4n-3}$,
$x_{4n-2},y_{4n-2},w_{4n-2},z_{4n-2}$, and
$x_{4n-1},y_{4n-1},w_{4n-1},z_{4n-1}$.
After reindexing the running indices to start at \(n=0\), the solutions obtained take the following forms: 
{\scriptsize
	\begin{align}
		x_{4n+4}=&\frac{x_{0}^{n+2}}{x_{-4}^{n+1}}\prod\limits_{s=0}^{n}\frac{\left( CG\right)^{s+1}+x_{-4}y_{-1}w_{-2}z_{-3}(CH+D)\sum\limits_{l=0}^{s}\left( CG\right)^l}{\left( AE\right)^{s+1}+x_{0}y_{-1}w_{-2}z_{-3}(AF+B)\sum\limits_{l=0}^{s}\left( AE\right)^l}, 
		\label{eq:x0closedcstbackR}\\
		x_{4n+1}=&\frac{x_{-3}y_{-4}^{n+1}}{y_0^{n+1}}\frac{\prod\limits_{s=0}^{n-1}[\left( CG\right)^{s+1}+x_{-3}y_{0}w_{-1}z_{-2}(CH+D)\sum\limits_{l=0}^{s}\left( CG\right)^l]}{\prod\limits_{s=0}^{n}[A\left( AE\right)^s+x_{-3}y_{-4}w_{-1}z_{-2}[B(AE)^s+(AF+B)\sum\limits_{l=0}^{s-1}\left( AE\right)^l]]}
		\label{eq:x1closedcstbackR}\\
		x_{4n+2}=&\frac{x_{-2}w_0^{n+1}}{w_{-4}^{n+1}}\prod\limits_{s=0}^{n}\frac{C\left( CG\right)^s+x_{-2}y_{-3}w_{-4}z_{-1}[D(CG)^s+(CH+D)\sum\limits_{l=0}^{s-1}\left( CG\right)^l]}{A\left( AE\right)^s+x_{-2}y_{-3}w_{0}z_{-1}[B(AE)^s+(AF+B)\sum\limits_{l=0}^{s-1}\left( AE\right)^l]}, 
		\label{eq:x2closedcstbackR}\\
				x_{4n+3}=&\frac{x_{-1}z_{-4}^{n+1}}{z_0^{n+1}}\prod\limits_{s=0}^{n }\frac{C\left(CG\right)^s+x_{-1}y_{-2}w_{-3}z_{0}[D(CG)^s+(CH+D)\sum\limits_{l=0}^{s-1}\left(CG\right)^l]}{\left( AE\right)^{s+1}+x_{-1}y_{-2}w_{-3}z_{-4}(AF+B)\sum\limits_{l=0}^{s}\left( AE\right)^l}
				\label{eq:x3closedcstbackR}
		\end{align}
		\begin{align}
			y_{4n+4}=& \frac{y_{0}^{n+2}}{y_{-4}^{n+1}}\prod\limits_{s=0}^{n}\frac{\left(AE\right)^{s+1}+x_{-3}y_{-4}w_{-1}z_{-2}(BE+F)\sum\limits_{l=0}^{s}\left(AE\right)^l}{\left(CG\right)^{s+1}+x_{-3}y_{0}w_{-1}z_{-2}(CH+D)\sum\limits_{l=0}^{s}\left(CG\right)^l}, 
			\label{eq:y0closedcstbackR}\\
			y_{4n+1}=& \frac{y_{-3}w_{-4}^{n+1}}{w_0^{n+1}}\frac{\prod\limits_{s=0}^{n-1}[\left(AE\right)^{s+1}+x_{-2}y_{-3}w_{0}z_{-1}(BE+F)\sum\limits_{l=0}^{s}\left(AE\right)^l]}{\prod\limits_{s=0}^{n}[C\left(CG\right)^s+x_{-2}y_{-3}w_{-4}z_{-1}[D(CG)^s+(CH+D)\sum\limits_{l=0}^{s-1}\left(CG\right)^l]]},	\label{eq:y1closedcstbackR}\end{align}\begin{align}
			y_{4n+2}=& \frac{y_{-2}z_0^{n+1}}{z_{-4}^{n+1}}\prod\limits_{s=0}^{n}\frac{E\left(AE\right)^s+x_{-1}y_{-2}w_{-3}z_{-4}[F(AE)^s+(BE+F)\sum\limits_{l=0}^{s-1}\left(AE\right)^l}{C\left(CG\right)^s+x_{-1}y_{-2}w_{-3}z_{0}[D(CG)^s+(CH+D)\sum\limits_{l=0}^{s-1}\left(CG\right)^l]},\label{eq:y2closedcstbackR}\\
			y_{4n+3}=&\frac{y_{-1}x_{-4}^{n+1}}{x_0^{n+1}}\prod\limits_{s=0}^{n}\frac{E\left( AE\right)^s+x_0y_{-1}w_{-4}z_{-3}[F(AE)^s+(BE+F)\sum\limits_{l=0}^{s-1}\left( AE\right)^l]}{\left(CG\right)^{s+1}+x_{-4}y_{-1}w_{-2}z_{-3}(CH+D)\sum\limits_{l=0}^{s}\left(CG\right)^l};\label{eq:y3closedcstbackR}
		\end{align}
\begin{align}
w_{4n+4}&= \frac{w_{0}^{n+2}}{w_{-4}^{n+1}}\prod\limits_{s=0}^{n}\frac{\left(CG\right)^{s+1}+x_{-2}y_{-3}w_{-4}z_{-1}(DG+H)\sum\limits_{l=0}^{s}\left(CG\right)^l}{\left(AE\right)^{s+1}+x_{-2}y_{-3}w_{0}z_{-1}(BE+F)\sum\limits_{l=0}^{s}\left(AE\right)^l},\label{eq:w0closedcstbackR}\\
w_{4n+1}&= \frac{w_{-3}z_{-4}^{n+1}}{z_0^{n+1}}\frac{\prod\limits_{s=0}^{n-1}[\left(CG\right)^{s+1}+x_{-1}y_{-2}w_{-3}z_{0}(DG+H)\sum\limits_{l=0}^{s}\left(CG\right)^l]}{\prod\limits_{s=0}^{n}[E\left(AE\right)^s+x_{-1}y_{-2}w_{-3}z_{-4}[F(AE)^s+(BE+F)\sum\limits_{l=0}^{s-1}\left(AE\right)^l]]}, \label{eq:w1closedcstbackR}\\
w_{4n+2}&= \frac{w_{-2}x_{0}^{n+1}}{x_{-4}^{n+1}}\prod\limits_{s=0}^{n}\frac{G\left(CG\right)^s+[H(CG)^s+(DG+H)\sum\limits_{l=0}^{s-1}\left(CG\right)^l]x_{-4}y_{-1}w_{-2}z_{-3}}{E\left(AE\right)^s+x_{0}y_{-1}w_{-2}z_{-3}[F(AE)^s+(BE+F)\sum\limits_{l=0}^{s-1}\left(AE\right)^l]},\label{eq:w2closedcstbackR}\\
			w_{4n+3}=&\frac{w_{-1}y_{-4}^{n+1}}{y_0^{n+1}}\prod\limits_{s=0}^{n}\frac{G\left(CG\right)^s+x_{-3}y_{0}w_{-1}z_{-2}[H(CG)^s+(DG+H)\sum\limits_{l=0}^{s-1}\left(CG\right)^l]}{\left( AE\right)^{s+1}+x_{-3}y_{-4}w_{-1}z_{-2}(BE+F)\sum\limits_{l=0}^{s}\left(AE\right)^l}\label{eq:w3closedcstbackR}
		\end{align}
		\begin{align}
			z_{4n+4}&= \frac{z_{0}^{n+2}}{z_{-4}^{n+1}}\prod\limits_{s=0}^{n}\frac{\left(AE\right)^{s+1}+x_{-1}y_{-2}w_{-3}z_{-4}(AF+B)\sum\limits_{l=0}^{s}\left(AE\right)^l}{\left(CG\right)^{s+1}+x_{-1}y_{-2}w_{-3}z_{0}(DG+H)\sum\limits_{l=0}^{s}\left(CG\right)^l},
			\label{eq:z0closedcstbackR}\\
			z_{4n+1}&= \frac{z_{-3}x_{-4}^{n+1}}{x_0^{n+1}}\frac{\prod\limits_{s=0}^{n-1}[\left(AE\right)^{s+1}+x_{0}y_{-1}w_{-2}z_{-3}(AF+B)\sum\limits_{l=0}^{s}\left(AE\right)^l]}{\prod\limits_{s=0}^{n}[G\left(CG\right)^s+x_{-4}y_{-1}w_{-2}z_{-3}[H(CG)^s+(DG+H)\sum\limits_{l=0}^{s-1}\left(CG\right)^l]]},	\label{eq:z1closedcstbackR}\\
			z_{4n+2}&= \frac{z_{-2}y_{0}^{n+1}}{y_{-4}^{n+1}}\prod\limits_{s=0}^{n}\frac{A\left(AE\right)^s+x_{-3}y_{-4}w_{-1}z_{-2}[B(AE)^s+(AF+B)\sum\limits_{l=0}^{s-1}\left(AE\right)^l]}{G\left(CG\right)^s+x_{-3}y_{0}w_{-1}z_{-2}[H(CG)^s+(DG+H)\sum\limits_{l=0}^{s-1}\left(CG\right)^l]},\label{eq:z2closedcstbackR}\\
			z_{4n+3}=&\frac{z_{-1}w_{-4}^{n+1}}{w_0^{n+1}}\prod\limits_{s=0}^{n}\frac{A\left(AE\right)^s+x_{-2}y_{-3}w_{0}z_{-1}[B(AE)^s+(AF+B)\sum\limits_{l=0}^{s-1}\left(AE\right)^l]}{\left(CG\right)^{s+1}+x_{-2}y_{-3}w_{-3}z_{-1}(DG+H)\sum\limits_{l=0}^{s}\left(CG\right)^l}.\label{eq:z3closedcstbackR}
		\end{align}}
\subsection{Autonomous sign cases and recovery of the solutions of the eight systems in \cite{ElsayedAlofi2023}}
The autonomous sign systems are recovered by taking $A,B,C,D,E,F,G,H\in\{+1,-1\}$. \\
\underline{\textit{System 1}}\\
System 1 is obtained by choosing
$	A=1,\; B=-1,\; C=1,\; D=-1,\;
	E=1,\; F=-1,\; G=1,\; H=-1.$
The solutions \eqref{eq:x0closedcstbackR}-\eqref{eq:z3closedcstbackR} yield Theorem 1 in \cite{ElsayedAlofi2023}.
\noindent \underline{\textit{System 2}}\\
System 2 is obtained by choosing
$	A=-1,\quad B=-1,\quad C=-1,\quad D=-1,\quad
	E=-1,\quad F=-1,\quad G=-1,\quad H=-1$. The solutions \eqref{eq:x0closedcstbackR}-\eqref{eq:z3closedcstbackR} yield Theorem 2 in \cite{ElsayedAlofi2023}.\\
\noindent \underline{\textit{System 3}}\\
System 3 is recovered from the general system by choosing
$
A=-1,\quad B=-1,\quad C=-1,\quad D=-1,\quad
E=-1,\quad F=-1,\quad G=1,\quad H=-1.
$
Under this particular choice of parameters, the general solutions
\eqref{eq:x0closedcstbackR}-\eqref{eq:z3closedcstbackR} reduce to the solutions stated in Theorem 4 of \cite{ElsayedAlofi2023}. We illustrate this correspondence using the expression for $x_{4n+4}$. Indeed, substituting the above parameter values into \eqref{eq:x0closedcstbackR} gives
\begin{align}
	x_{4n+4}
	&=\frac{x_0^{n+2}}{x_{-4}^{n+1}}
	\prod_{s=0}^{n}(-1)^{s+1}=\frac{x_0^{n+2}}{x_{-4}^{n+1}}
	(-1)^{\frac{n^2+3n+2}{2}}.
	\label{sys3}
\end{align}
To obtain the subsequences appearing in Theorem 4, we separate the even and odd values of the index $n$. More precisely, setting $n:=2n$ in \eqref{sys3} yields
\begin{align}
	x_{8n+4}
	=\frac{x_0^{2n+2}}{x_{-4}^{2n+1}}
	(-1)^{n+1},
\end{align}
whereas setting $n:=2n+1$ gives
\begin{align}
	x_{8n+8}
	=\frac{x_0^{2n+3}}{x_{-4}^{2n+2}}
	(-1)^{n+1}.
\end{align}
These are precisely the two corresponding formulas for the $x$-sequence obtained in Theorem 4 of \cite{ElsayedAlofi2023}.
\par \noindent
The remaining formulas in that theorem are recovered in exactly the same manner. 


\noindent \underline{\textit{System 4}}\\
System 4 is obtained by choosing
$	A=-1,\quad B=-1,\quad C=-1,\quad D=-1,\quad
E=-1,\quad F=-1,\quad G=-1,\quad H=1.$  The solutions \eqref{eq:x0closedcstbackR}-\eqref{eq:z3closedcstbackR} yield Theorem 5 in \cite{ElsayedAlofi2023}.

\noindent \underline{\textit{System 5}}\\
System 5 is obtained by choosing
$	A=-1,\quad B=-1,\quad C=-1,\quad D=-1,\quad
E=-1,\quad F=-1,\quad G=1,\quad H=1.$  The solutions \eqref{eq:x0closedcstbackR}-\eqref{eq:z3closedcstbackR} yield Theorem 6 in \cite{ElsayedAlofi2023}.
To illustrate the reduction, substitute these values into
\eqref{eq:x0closedcstbackR}. Since
$
CG=-1,\; AE=1,\; CH+D=-2,
\; AF+B=0,$
we obtain
\begin{align}
x_{4n+4}
&=\frac{x_0^{n+2}}{x_{-4}^{n+1}}
\prod_{s=0}^{n}
\left[(-1)^{s+1}
-2x_{-4}y_{-1}w_{-2}z_{-3}
\sum_{l=0}^{s}(-1)^l\right].
\label{sys5}
\end{align}
Because
\[
\sum_{l=0}^{s}(-1)^l=
\begin{cases}
1,&s\ \text{even},\\
0,&s\ \text{odd},
\end{cases}
\]
formula \eqref{sys5} separates into the even and odd subsequences
appearing in Theorem 6 of \cite{ElsayedAlofi2023}. The remaining
component formulas follow from
\eqref{eq:x1closedcstbackR}--\eqref{eq:z3closedcstbackR} in the same
manner.

\noindent \underline{\textit{System 6}}\\
System 6 is obtained by choosing
$	A=-1,\quad B=-1,\quad C=-1,\quad D=-1,\quad
E=1,\quad F=1,\quad G=-1,\quad H=1.$  The solutions \eqref{eq:x0closedcstbackR}-\eqref{eq:z3closedcstbackR} yield Theorem 7 in \cite{ElsayedAlofi2023}.
For example, substituting these values into
\eqref{eq:x0closedcstbackR}, and using
$CG=1,\; AE=-1,\; CH+D=-2,
\; AF+B=-2,$ gives
\begin{align}
x_{4n+4}
&=\frac{x_0^{n+2}}{x_{-4}^{n+1}}
\prod_{s=0}^{n}
\frac{1-2(s+1)x_{-4}y_{-1}w_{-2}z_{-3}}
{(-1)^{s+1}-2x_0y_{-1}w_{-2}z_{-3}
\sum_{l=0}^{s}(-1)^l}.
\label{sys6}
\end{align}
Separating \eqref{sys6} according to the parity of $s$ yields the
corresponding $x$-subsequences in Theorem 7 of
\cite{ElsayedAlofi2023}. The other formulas are recovered similarly
from the remaining general expressions.

\noindent \underline{\textit{System 7}}\\
System 7 is obtained by choosing
$A=-1,\quad B=-1,\quad C=-1,\quad D=-1,\quad
E=1,\quad F=-1,\quad G=1,\quad H=-1.$  The solutions \eqref{eq:x0closedcstbackR}-\eqref{eq:z3closedcstbackR} yield Theorem 8 in \cite{ElsayedAlofi2023}. \\
\noindent \underline{\textit{System 8}}\\
System 8 is obtained by choosing
$A=-1,\quad B=-1,\quad C=-1,\quad D=-1,\quad
E=1,\quad F=-1,\quad G=-1,\quad H=1.$  The solutions \eqref{eq:x0closedcstbackR}-\eqref{eq:z3closedcstbackR} yield Theorem 9 in \cite{ElsayedAlofi2023}.
As a representative calculation, substituting these values into
\eqref{eq:x0closedcstbackR} gives $CG=1,\; AE=-1,\; CH+D=-2,
\; AF+B=0,$
and therefore
\begin{align}
x_{4n+4}
&=\frac{x_0^{n+2}}{x_{-4}^{n+1}}
(-1)^{\frac{(n+1)(n+2)}{2}}
\prod_{s=0}^{n}
\left[1-2(s+1)x_{-4}y_{-1}w_{-2}z_{-3}\right].
\label{sys8}
\end{align}
The parity decomposition of \eqref{sys8} gives the corresponding
expressions in Theorem 9 of \cite{ElsayedAlofi2023}. The remaining
solutions follow by applying the same substitutions to
\eqref{eq:x1closedcstbackR}--\eqref{eq:z3closedcstbackR}.
\begin{theorem}\label{theo1}
Let $(x_n)_{n\geq0}$, $(y_n)_{n\geq0}$, $(w_n)_{n\geq0}$ and
$(z_n)_{n\geq0}$ be solutions of \eqref{eq:forward_shifted_full} for
some non-zero constants $A,B,C,D,E,F,G,H$. Suppose that the initial conditions $x_i,y_i,w_i,z_i$, $i=0,\ldots,4$, satisfy
$x_0=x_4,\; y_0=y_4,\; w_0=w_4,\; z_0=z_4,$
$x_1y_0w_3z_2
=x_2y_1w_0z_3
=x_0y_3w_2z_1
=x_3y_2w_1z_0
=\frac{1-A}{B},$\; 
$x_1y_0w_3z_2
=x_2y_1w_0z_3
=x_0y_3w_2z_1
=x_3y_2w_1z_0
=\frac{1-C}{D},$ 
and
$(D-F)x_3y_2w_1z_0=E-C,\;
(F-H)x_0y_3w_2z_1=G-E,\;
(H-B)x_1y_0w_3z_2=A-G.$ 
Assume also that
$CH+D=AF+B=BE+F=DG+H.$
Then every component of the solution has period dividing $4$.
Moreover, if the complete solution has neither period $1$ nor period
$2$, then its least period is $4$.
\end{theorem}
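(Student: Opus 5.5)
The plan is to work with the reciprocal invariants $\bar X_n,\bar Y_n,\bar W_n,\bar Z_n$ of \eqref{changezbar} rather than with the closed product formulas. I will show that, under the stated hypotheses, all four invariants are equal to one and the same constant for every $n\ge 0$. The recovery relations \eqref{eq:zrecover} then immediately give period $4$.

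\emph{Step 1: a common fixed point.} Put $k=x_1y_0w_3z_2=\frac{1-A}{B}=\frac{1-C}{D}$. The hypotheses make all four initial products equal to $k$, so $k\neq 0$ would be needed; I assume this, since otherwise the invariants are undefined. Set $p=1/k$. The two expressions for $k$ say $A+Bk=1$ and $C+Dk=1$. The three cross conditions can be rewritten as $E+Fk=C+Dk$, $G+Hk=E+Fk$ and $A+Bk=G+Hk$, so also $E+Fk=1$ and $G+Hk=1$. Dividing each identity by $k$ gives $Ap+B=p$, $Cp+D=p$, $Ep+F=p$ and $Gp+H=p$. Hence $p$ is a common fixed point of all four affine maps occurring in \eqref{eq:barZ2}--\eqref{eq:barX2} with constant coefficients. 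The hypothesis $CH+D=AF+B=BE+F=DG+H$ is then automatically consistent: with constant coefficients it makes the $\Lambda$'s in \eqref{eq:Zfour}--\eqref{eq:Wfour} agree. I expect to use it only as a check and not as an essential ingredient.

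\emph{Step 2: the invariants are constant.} The relations \eqref{eq:barZ2}--\eqref{eq:barX2} are two-step recurrences, so two consecutive values are needed. By hypothesis $\bar X_0=\bar Y_0=\bar W_0=\bar Z_0=p$. For $n=1$, I use $x_4=x_0$, $y_4=y_0$, $w_4=w_0$, $z_4=z_0$:
\[
\bar X_1=\frac{1}{x_1y_4w_3z_2}=\bar Y_0,\quad \bar Y_1=\frac{1}{x_2y_1w_4z_3}=\bar W_0,\quad \bar W_1=\frac{1}{x_3y_2w_1z_4}=\bar Z_0,\quad \bar Z_1=\frac{1}{x_4y_3w_2z_1}=\bar X_0 .
\]
So all eight starting values equal $p$. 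Strong induction with the fixed-point property of Step 1 then gives $\bar X_n=\bar Y_n=\bar W_n=\bar Z_n=p$ for all $n\ge0$. The only point needing care is that each relation in \eqref{eq:barZ2}--\eqref{eq:barX2} feeds one invariant into another, so the induction must be run on all four simultaneously.

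\emph{Step 3: periodicity.} By \eqref{eq:zrecover}, $x_{n+4}=x_n\bar X_n/\bar Z_{n+1}=x_n$, and likewise $y_{n+4}=y_n$, $w_{n+4}=w_n$, $z_{n+4}=z_n$. So every component has period dividing $4$. The only divisors of $4$ are $1$, $2$ and $4$, so if the solution has neither period $1$ nor period $2$, its least period is $4$.

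The main obstacle is bookkeeping rather than ideas. I must verify the index matching in Step 2, namely that each $\bar U_1$ coincides with some $\bar V_0$ once $u_4=u_0$ is used. I must also check that the coefficient pairing in \eqref{eq:barZ2}--\eqref{eq:barX2} is exactly the one making $p$ a fixed point. If the pairing turns out to differ from what is stated, the cross conditions are precisely what equalizes all four expressions $A+Bk$, $C+Dk$, $E+Fk$, $G+Hk$, so the argument survives unchanged.
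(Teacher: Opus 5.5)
Your proof is correct, but it takes a different route from the paper's.

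\textbf{The paper's route.} The paper sets $K$ equal to the common initial product and derives $A+BK=C+DK=E+FK=G+HK=1$. It then uses the extra hypothesis $CH+D=AF+B$ to get $AE=CG$ and $BE+F=DG+H$. Finally it substitutes into the autonomous closed product formulas \eqref{eq:x0closedcst}--\eqref{eq:z3closedcst} and checks, explicitly only for $w_{4n}$, that each factor's numerator and denominator coincide, giving $w_{4n}=w_4^n/w_0^{n-1}=w_0$.

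\textbf{Your route.} You stay at the level of the two-step recurrences \eqref{eq:barZ2}--\eqref{eq:barX2}. You observe that $p=1/K$ is a common fixed point of the four affine maps, and that the eight starting values $\bar U_0,\bar U_1$ all equal $p$. Your index matching via $u_4=u_0$ is right, and so is the coefficient pairing. It follows that the invariants are constant, and \eqref{eq:zrecover} gives period $4$ at once.

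\textbf{What your route buys.}
\begin{itemize}
\item It treats all sixteen subsequences uniformly; the paper handles fifteen of them only by ``applying the same substitutions''.
\item It needs none of the closed-form machinery.
\item It shows that the denominators $A+Bx_{n+1}y_nw_{n+3}z_{n+2}$ and their analogues are identically $1$. So nonzero initial values already guarantee that the iteration is well defined.
\item It shows that the hypothesis $CH+D=AF+B=BE+F=DG+H$ is superfluous.
\end{itemize}

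\textbf{A point of precision.} Your phrase ``automatically consistent'' must not be read as ``implied''. From $A=1-BK$, $C=1-DK$, $E=1-FK$, $G=1-HK$ one gets $AF+B=BE+F$ and $CH+D=DG+H$ for free. However, $CH+D=AF+B$ is a genuine extra constraint; for instance $K=1$, $(A,B,C,D,E,F,G,H)=(2,-1,2,-1,2,-1,3,-2)$ satisfies every other hypothesis but gives $AF+B=-3\neq -5=CH+D$. It is simply unnecessary. Even within the paper's formula, each numerator and each denominator equals $1$ separately: since $K(DG+H)=1-CG$, the numerator is $(CG)^s+(1-CG)\sum_{l=0}^{s-1}(CG)^l=1$ by telescoping. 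So $AE=CG$ is never needed.

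\textbf{What the paper's route buys.} Its value is mainly illustrative: it presents the periodicity theorem as a consequence of the general solution formulas derived earlier in the paper.
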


\begin{proof}
Set $K=x_1y_0w_3z_2=x_2y_1w_0z_3
=x_0y_3w_2z_1=x_3y_2w_1z_0.$
The hypotheses imply $A+BK=C+DK=E+FK=G+HK=1.$ 
They also give
$AE=CG$ {and} $BE+F=DG+H$. Indeed, writing $A=1-BK,\; C=1-DK,
\; E=1-FK,\; G=1-HK$,  the coefficient relation $CH+D=AF+B=BE+F=DG+H$ 
 yields $(B+F)K-BFK^2=(D+H)K-DHK^2$, which is equivalent to $AE=CG$. We now use the autonomous solution formulas. For example,
\eqref{eq:w0closedcst} gives
\begin{align}
w_{4n}
&=\frac{w_4^n}{w_0^{n-1}}
\prod_{s=0}^{n-1}
\frac{(CG)^s+K(DG+H)\sum\limits_{l=0}^{s-1}(CG)^l}
{(AE)^s+K(BE+F)\sum\limits_{l=0}^{s-1}(AE)^l}\\
&=w_0,
\end{align}
since $w_4=w_0$, $AE=CG$, and $BE+F=DG+H$.
Applying the same substitutions to
\eqref{eq:x0closedcst}--\eqref{eq:z3closedcst} yields
\[
x_{4n+j}=x_j,\qquad y_{4n+j}=y_j,
\qquad w_{4n+j}=w_j,
\qquad z_{4n+j}=z_j,
\]
for $j=0,1,2,3$. Therefore each component has period dividing $4$.
If the complete solution has neither period $1$ nor period $2$, its
least period is $4$.
\end{proof}

\noindent For a numerical illustration, we let $A=C=E=G=2,\; B=D=F=H=-1$,
$(x_0,x_1,x_2,x_3,x_4)
=\left(\frac65,\frac45,\frac32,\frac7{10},\frac65\right),$ $ 
(y_0,y_1,y_2,y_3,y_4)
=\left(\frac9{10},\frac{11}{10},\frac34,\frac{13}{10},\frac9{10}\right),
(w_0,w_1,w_2,w_3,w_4)
=\left(\frac{21}{20},\frac{17}{20},\frac54,\frac{19}{20},\frac{21}{20}\right),$ $
(z_0,z_1,z_2,z_3,z_4)
=\left(\frac{800}{357},\frac{20}{39},\frac{250}{171},
\frac{400}{693},\frac{800}{357}\right).$
These values satisfy the hypotheses of Theorem~\ref{theo1} and do not
repeat after one or two indices. 

\begin{figure}[H]
\centering
\includegraphics[width=0.96\linewidth]{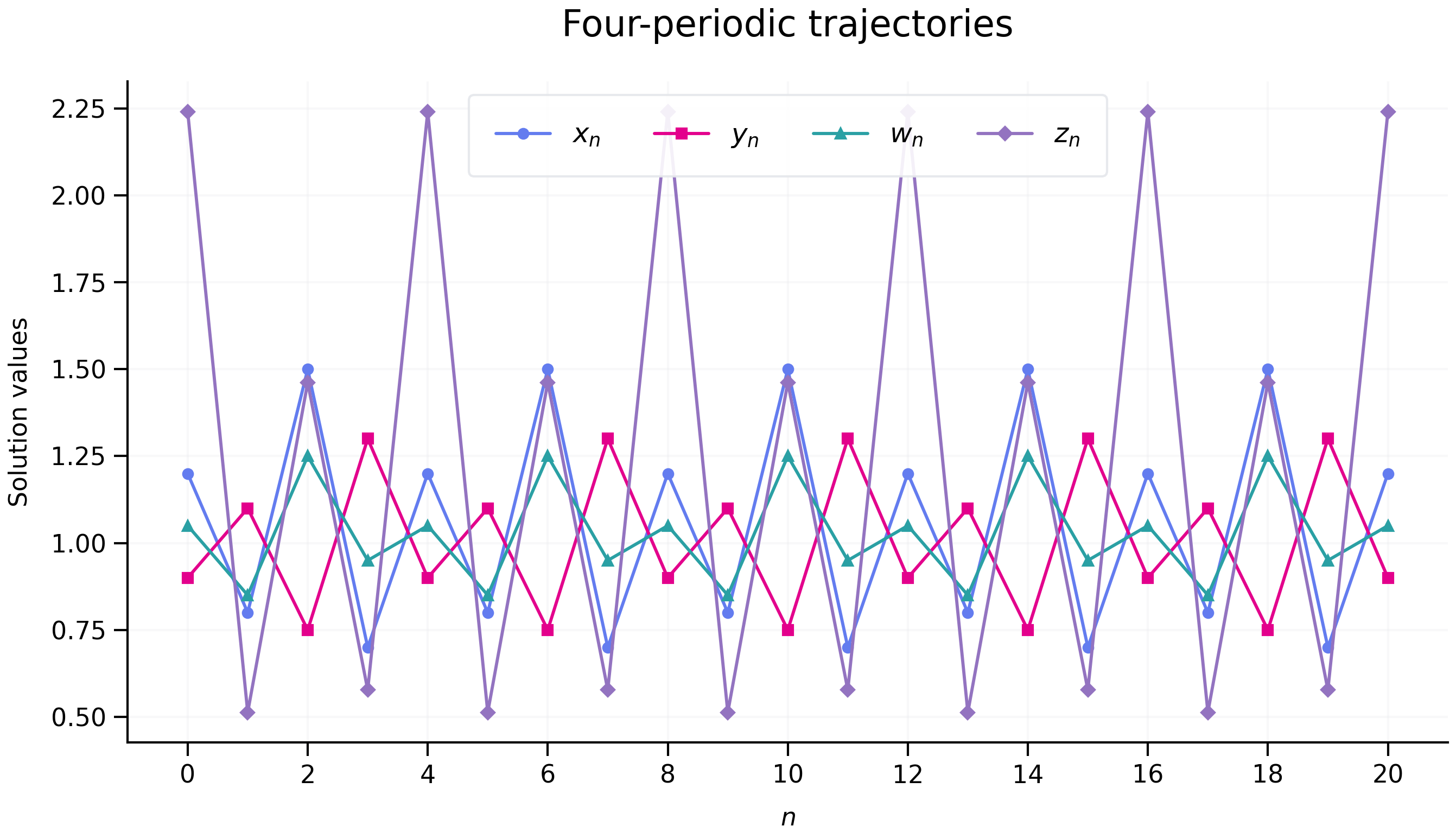}
\caption{Four-periodic trajectories of $x_n$, $y_n$, $w_n$ and $z_n$
satisfying the hypotheses of Theorem~\ref{theo1}.}
\label{fig:theorem21Overlay}
\end{figure}
\noindent Figure~\ref{fig:theorem21Overlay} illustrates the four-periodic
behaviour established in Theorem~\ref{theo1}. All four components
repeat after every four iterations, in agreement with the analytical
result.

\section{Conclusion}

In this paper, Lie symmetry analysis was applied to a four-dimensional fifth-order rational difference system with variable coefficients. The admitted symmetry generator was obtained from the determining equations, and the corresponding canonical coordinates and invariant quantities were constructed. These invariants reduced the original nonlinear system to a collection of recurrence relations involving reciprocal invariant variables. The reciprocal invariants were shown to satisfy affine two-step and four-step recurrence relations. These recurrence relations provided
the key reduction mechanism that enabled the recovery of the original variables through explicit product formulas. Solving the reduced
recurrences yielded explicit closed-form representations of the solutions. The developed framework extends previously studied autonomous sign systems by allowing arbitrary non-zero coefficient sequences. Furthermore, the eight autonomous sign systems of Elsayed and Alofi were
recovered as special cases of the general theory. The numerical illustration supports the four-periodic behaviour established analytically in Theorem~\ref{theo1}.
Future work may investigate periodic coefficient structures, attractivity and stability properties of the system of non-autonomous difference equations.

\section*{Conflict of interest} The authors declare that they have no conflict of interest.



\begin{thebibliography}{99}
		\bibitem{N1} Y. Akrour, M. Kara, N. Touafek and Y. Yazlik, {Solutions formulas for some general systems of difference equations}, \textit{Miskolc Mathematical Notes}, \textbf{22:2} (2021), 529--555.
	
	\bibitem{ElsayedAlofi2023}
	E.M. Elsayed and B.S. Alofi,
	The periodic nature and expression on solutions of some rational systems of
	difference equations,
	\emph{Alex. Eng. J.} \textbf{74} (2023), 269--283.
	
	
	
	
	\bibitem{MensahAnani2024}
	M. Folly-Gbetoula and Kwassi Anani, Method of Lie symmetry for analytical solutions, periodicity and attractivity of a family of tenth-order difference equations. \textit{Afrika Matematika} \textbf{36:122} (2025)
	
	
		\bibitem{M1} M. Folly-Gbetoula, Symmetry, reductions and exact solutions of the difference equation $u_{n+2}=au_n/(1+bu_nu_{n+1})$, {\it J. Differ. Equ. Appl.}, \textbf{23:6} (2017), 1017--1024.
	\bibitem{M2} M. Folly-Gbetoula, K. Mkhwanazi and  D. Nyirenda, On a study of a family  of higher order recurrence relations, {\it Math. Probl. Eng.} \textbf{2022} (2022), 6770105, 11 pages.
	\bibitem{M3} M. Folly-Gbetoula and A. H. Kara, Invariance analysis and reduction of discrete Painlev\'e equations, {\it J. Differ. Equ. Appl.}, \textbf{22:9} (2016), 1378--1388.
	
		\bibitem{GocenMensah2024}
	M. G\"o\c{c}en and M. Folly-Gbetoula,
	Form of the solutions of difference equations via Lie symmetry analysis and Fibonacci numbers,
	\emph{Quaest. Math.} \textbf{47}(2) (2024), 399--411.
	
	\bibitem{Hydon2014}
	P.E. Hydon,
	\emph{Difference Equations by Differential Equation Methods}, Cambridge
	University Press, Cambridge, 2014.
	
	\bibitem{NV} N. Joshi and P. Vassiliou, The existence of Lie Symmetries for First-Order Analytic Discrete Dynamical Systems, \textit{J. Math. Anal. Appl.}, \textbf{195} (1995), 872--887.
	
	\bibitem{Maeda1987}
	S. Maeda,
	The similarity method for difference equations,
	\emph{IMA Journal of Applied Mathematics} \textbf{38} (1987), 129--134.

\bibitem{M4}	K. Mkhwanazi  and  M. Folly-Gbetoula, Symmetries and Solvability of a Class of Higher Order Systems of Ordinary Difference Equations, {\it Symmetry} \textbf{ 14:1} (2022), 108.
\end{thebibliography}
\end{document}